% !TeX spellcheck = en_US

\documentclass[smallextended]{svjour3}

\smartqed

\journalname{Optimization Letters}

\usepackage[T1]{fontenc}
\usepackage[utf8]{inputenc}
\usepackage[english]{babel}
\usepackage{lmodern}

\usepackage{amsmath}
\usepackage{amssymb}
\usepackage{bm}
\usepackage{bbm}
\usepackage{spalign}

\usepackage{fix-cm}

\usepackage[obeyspaces]{url}

\usepackage{graphicx}
\usepackage{float}

\usepackage{fancyvrb}
\usepackage{empheq}

\usepackage{tikz}
\usepackage{tikzit}

\makeatletter
\let\cl@chapter\undefined
\makeatletter

\usepackage[capitalize,nameinlink]{cleveref}[0.21]

\setlength{\oddsidemargin}{0in}
\setlength{\evensidemargin}{0in}
\setlength{\textwidth}{6.5in}
\setlength{\topmargin}{-.3in}
\setlength{\textheight}{9in}
\pagestyle{plain}

\tikzstyle{node_style}=[fill=white, draw={rgb,255: red,1; green,128; blue,71}, shape=circle]
\tikzstyle{node_intermediate}=[fill=white, draw={rgb,255: red,255; green,128; blue,0}, shape=circle]
\tikzstyle{node_lower}=[fill=white, draw={rgb,255: red,191; green,0; blue,64}, shape=circle]

% Edge styles
\tikzstyle{edge_anticipate}=[draw={rgb,255: red,0; green,207; blue,207}, <-, dash pattern=on 3pt off 2pt, line width=2pt]
\tikzstyle{edge_param}=[draw={rgb,255: red,190; green,0; blue,0}, ->, line width=2pt]

\title{Complexity of near-optimal robust versions of multilevel optimization problems}
\author{Mathieu Besançon, Miguel F. Anjos, Luce Brotcorne}

\institute{Mathieu Besançon \at
              Department of Mathematics and Industrial Engineering\\
              Polytechnique Montréal, QC, Canada \&\\
              Centrale Lille \&\\
              INRIA Lille Nord-Europe, France \\
              \email{mathieu.besancon@polymtl.ca}           %  \\
%             \emph{Present address:} of F. Author  %  if needed
            \and
            Miguel F. Anjos \at
              School of Mathematics, University of Edinburgh, UK
            \and
            Luce Brotcorne \at
              INRIA Lille Nord-Europe, France
}

\DeclareMathOperator*{\argmax}{arg\,max}
\DeclareMathOperator*{\argmin}{arg\,min}

\begin{document}

\maketitle

\begin{abstract}
Near-optimality robustness extends multilevel optimization with a
limited deviation of a lower level from its optimal solution,
anticipated by higher levels. We analyze the complexity of near-optimal
robust multilevel problems, where near-optimal robustness is modelled through
additional adversarial decision-makers. Near-optimal robust versions
of multilevel problems are shown to remain in the same complexity class
as the problem without near-optimality robustness under general conditions.
\keywords{near-optimal robustness \and multilevel optimization \and complexity theory}
% \PACS{PACS code1 \and PACS code2 \and more}
\subclass{
91A65\and % Hierarchical games
90C26\and % Nonconvex programming
90C10\and % Integer programming
90C10\and % Integer programming
90C11\and % Mixed-Integer programming
90C60     % Abstract computational complexity for mathematical programming problems
%03D15\and % Complexity of computation
}
\end{abstract}

\section{Introduction}

Multilevel optimization is a class of mathematical optimization problems
where other problems are embedded in the constraints.
They are well suited to model sequential decision-making processes,
where a first decision-maker,
the leader intrinsically integrates the reaction of another
decision-maker, the follower, into their decision-making problem.

In recent years, most of the research focuses on the study and design of
efficient solution methods for the case of two levels, namely bilevel problems \cite{dempe2018review},
which fostered a growing range of applications.
\\

Near-optimal robustness, defined in \cite{besanccon2019near},
is an extension of bilevel optimization. In this setting,
the upper level anticipates limited deviations of the lower level
from an optimal solution and aims at a solution that remains feasible
for any feasible and near-optimal solution of the lower level. 
This protection of the upper level against uncertain deviations of the lower-level
has led to the characterization of near-optimality robustness as a
robust optimization approach for bilevel optimization.
The lower-level response corresponds to the uncertain parameter
and the maximum deviation of the objective value from an optimal solution
to the uncertainty budget.
Because the set of near-optimal lower-level solutions potentially has
infinite cardinality and depends on the upper-level decision itself,
near-optimality robustness adds generalized semi-infinite constraints
to the bilevel problem. The additional constraint can also be viewed as
a form of robustness under decision-dependent uncertainty.\\

In this paper, we prove complexity results on multilevel problems
to which near-optimality robustness constraints are added under various 
forms. We show that under fairly general conditions, the near-optimal
robust version of a multilevel problem remains on the same level of the
polynomial hierarchy as the original problem.
These results are non-trivial assuming that the polynomial hierarchy
does not collapse and open the possibility of solution algorithms
for near-optimal robust multilevel problems as efficient
as for their canonical counterpart.
Even though we focus on near-optimal robust multilevel problems,
the complexity results we establish hold for all multilevel problems
that present the same hierarchical structure, i.e. the same anticipation
and parameterization between levels as the near-optimal formulation
with the adversarial problems, as defined in \cref{sec:simplebilevel}.
\\

The rest of this paper is organized as follows.
\cref{sec:notation} introduces the notation and the background
on near-optimality robustness and existing complexity results in multilevel optimization.
\cref{sec:simplebilevel} presents complexity results for the
near-optimal robust version of bilevel problems, where the
lower level belongs to $\mathcal{P}$ and $\mathcal{NP}$.
These results are extended in \cref{sec:multilevel} to multilevel
optimization problems, focusing on integer multilevel linear problems
with near-optimal deviations of the topmost intermediate level.
\cref{sec:genmulti} provides complexity results for a generalized
form of near-optimal robustness in integer multilevel problems, where
multiple decision-makers anticipate near-optimal reactions of a lower
level. Finally, we draw some conclusions in \cref{sec:conclusioncomplex}.

\section{Background on near-optimality robustness and multilevel optimization}\label{sec:notation}

In this section, we introduce the notation and terminology for bilevel
optimization and near-optimality robustness, and
highlight prior complexity results in multilevel optimization.
Let us define a bilevel problem as:
\begin{subequations}\label{prob:bilevelgen}
\begin{align}
\min_{x}\,\,& F(x, v) \\
\text{s.t.}\,\,\, & G_k(x, v) \leq 0 & \forall k \in \left[\![m_u\right]\!]  \\
& x \in \mathcal{X} \\
& \text{where } v \in \argmin_{y\in\mathcal{Y}} \{f(x,y) \text{ s.t. }g_i(x, y) \leq 0 \,\, \forall i \in \left[\![m_l\right]\!] \}.
\end{align}
\end{subequations}

\noindent
We denote by $\mathcal{X}$ and $\mathcal{Y}$ the domain of upper-
and lower-level variables respectively.
We use the convenience notation $\left[\![n\right]\!] = \{1, \dots,n\}$ for a natural $n$.
\\ % looser requirements on Y?

Problem \eqref{prob:bilevelgen} is ill-posed, since multiple
solutions to the lower level may exist \cite[Ch. 1]{dempe2015bilevel}.
Models often rely on additional assumptions to alleviate this ambiguity,
the two most common being the optimistic and pessimistic approaches.
In the optimistic case (BiP), the lower level 
selects an optimal decision that most favours the upper level.
In this setting, the lower-level decision can be taken by the upper level,
as long as it is optimal for the lower-level problem. The upper level
can thus optimize over both $x$ and $v$, leading to:
\begin{subequations}\label{prob:bilevelstandardopt}
\begin{align}
\text{(BiP): } \min_{x,v}\,\,& F(x, v) \\
\text{s.t.}\,\,\,& G_k(x, v) \leq 0 & \forall k \in \left[\![m_u\right]\!]\label{eq:upfeas}  \\
& x \in \mathcal{X} \\
& v \in \argmin_{y\in\mathcal{Y}} \{f(x,y) \text{ s.t. }g_i(x, y) \leq 0 \,\, \forall i \in \left[\![m_l\right]\!] \}\label{eq:lowopt}.
\end{align}
\end{subequations}

\noindent
Constraint \eqref{eq:lowopt} implies that $v$ is feasible for the
lower level and that $f(x,v)$ is the optimal value of the lower-level
problem, parameterized by $x$.

The pessimistic approach assumes that the lower level chooses
an optimal solution that is the worst for the upper-level
objective as in \cite{dempe2018review} or with respect to the
upper-level constraints as in \cite{Wiesemann2013}.
.\\

\noindent
The near-optimal robust version of (BiP) considers that the lower-level solution may
not be optimal but near-optimal with respect to the lower-level objective function.
The tolerance for near-optimality, denoted by $\delta$ is expressed
as a maximum deviation of the  objective value from optimality.
The problem solved at the upper level must integrate this deviation
and protects the feasibility of its constraints for any near-optimal lower-level decision.
The problem is formulated as:
\begin{subequations}\label{prob:norbip}
\begin{align}
\text{(NORBiP): } \min_{x,v}\,\, & F(x, v) \\
\text{s.t.}\,\,\,& \eqref{eq:upfeas}-\eqref{eq:lowopt}\\
& G_k(x, z) \leq 0\,\, \forall z \in \mathcal{Z}(x;\delta) & \forall k \in \left[\![m_u\right]\!]\label{eq:semiinf}\\
& \text{where } \mathcal{Z}(x;\delta) = \{y\in\mathcal{Y}\,\,\text{\large |}\, f(x, y) \leq f(x, v) + \delta, g(x, y) \leq 0\}.
\end{align}
\end{subequations}
\noindent
$\mathcal{Z}(x;\delta)$ denotes the near-optimal set, i.e.
the set of near-optimal lower-level solutions,
depending on both the upper-level decision $x$ and $\delta$.
(NORBiP) is a generalization of the pessimistic bilevel problem
since the latter is both a special case and
a relaxation of (NORBiP) \cite{besanccon2019near}.
We refer to (BiP) as the \textbf{canonical} problem for 
(NORBiP) (or equivalently Problem \eqref{prob:norbip2})
and (NORBiP) as the near-optimal robust version of (BiP).
In the formulation of (NORBiP), the upper-level objective depends
on decision variables of both levels, but is not protected against near-optimal
deviations. A more conservative formulation also protecting
the objective by moving it to the constraints in an epigraph formulation \cite{besanccon2019near}
is given by:

\begin{align*}
\text{(NORBiP-Alt): } \min_{x,v,\tau}\,\, & \tau \\
\text{s.t.}\,\,\,& \eqref{eq:upfeas}-\eqref{eq:lowopt}\\
& G_k(x, z) \leq 0\,\, \forall z \in \mathcal{Z}(x;\delta) & \forall k \in \left[\![m_u\right]\!]\\
& F(x, z) \text{ }\,\leq \tau \text{ } \forall z \in \mathcal{Z}(x;\delta),\\
\end{align*}
\noindent
The optimal values of the three problems are ordered as:
\begin{equation*}
    \text{opt(BiP)} \leq \text{opt(NORBiP)} \leq \text{opt(NORBiP-Alt)}.
\end{equation*}

We next provide a review of complexity results for bilevel
and multilevel optimization problems.
Bilevel problems are $\mathcal{NP}$-hard in general, even when
the objective functions and constraints at both levels are linear \cite{audet1997links}.
When the lower-level problem is convex, a common solution approach
consists in replacing it with its KKT conditions
\cite{allende2013solving,dempe2019solution},
which are necessary and sufficient if the problem satisfies certain constraint qualifications.
This approach results in a single optimization problem with complementarity constraints,
of which the decision problem is $\mathcal{NP}$-complete \cite{chung1989np}.
A specific form of the three-level problem is investigated in
\cite{dempe2020special}, where only the objective value of
the bottom-level problem appears in the objective functions of
the first and second levels.
If these conditions hold and all objectives and constraints are linear,
the problem can be reduced to a single level one
with complementarity constraints of polynomial size.
This model is similar to the worst-case reformulation
of (NORBiP) presented in \cref{sec:simplebilevel}.

Pessimistic bilevel problems for which no upper-level constraint
depends on lower-level variables are studied in \cite{zare2018class}.
The problem of finding an optimal solution to the pessimistic case is
shown to be $\mathcal{NP}$-hard, even if a solution to the optimistic
counterpart of the same problem is provided.
A variant is also defined, where the lower level may pick a
suboptimal response only impacting the upper-level objective.
This variant is comparable to the Objective-Robust Near-Optimal
Bilevel Problem defined in \cite{besanccon2019near}.
In \cite{Wiesemann2013}, the independent case of the pessimistic
bilevel problem is studied, corresponding to a special case of (NORBiP)
with $\delta=0$ and all lower-level constraints independent of the upper-level
variables. It is shown that the linear independent pessimistic
bilevel problem, and consequently the linear near-optimal robust
bilevel problem, can be solved in polynomial time while
it is strongly $\mathcal{NP}$-hard in the non-linear case.
\\

When the lower-level problem cannot be solved in polynomial
time, the bilevel problem is in general $\Sigma_2^P$-hard.
The notion of $\Sigma_2^P$-hardness and classes of the polynomial 
hierarchy are recalled in \cref{sec:simplebilevel}.
Despite this complexity result, new algorithms and corresponding implementations have
been developed to solve these problems and in particular,
mixed-integer linear bilevel problems
\cite{fischetti2017new,tahernejad2016branch,liu2020enhanced}.
Variants of the bilevel knapsack were investigated in 
\cite{caprara2014study}, and proven to be $\Sigma_2^P$-hard
as the generic mixed-integer bilevel problem.
\\

Multilevel optimization was initially investigated in \cite{jeroslow1985polynomial}
in the case of linear constraints and objectives at all levels. In this setting,
the problem is shown to be in $\Sigma_s^P$, with $s+1$ being the
number of levels.
The linear bilevel problem corresponds to $s=1$ and is in
$\Sigma_1^P\equiv\mathcal{NP}$. If at least the bottom-level
problem involves integrality constraints
(or more generally belongs to $\mathcal{NP}$ but not $\mathcal{P}$),
the multilevel problem with $s$ levels belongs to $\Sigma_s^P$.
A model unifying multistage stochastic and multilevel problems
is defined in \cite{bolusani2020unified}, based on a risk 
function capturing the component of the objective function which is
unknown to a decision-maker at their stage, either because
of a stochastic component or of
another decision-maker that acts after the current one.\\

As highlighted in \cite{bolusani2020unified,ralphscomplexity}, most
results in the literature on complexity of multilevel
optimization use $\mathcal{NP}$-hardness as the sole characterization.
This only indicates that a given problem is at least as hard as all
problems in $\mathcal{NP}$ and that no polynomial-time solution
method should be expected unless $\mathcal{NP} = \mathcal{P}$.\\

We characterize near-optimal robust multilevel problems
not only on the hardness or ``lower bound'' on complexity,
i.e. being at least as hard as all problems
in a given class but through their complexity ``upper bound'',
i.e. the class of the polynomial hierarchy they belong to.
The linear optimistic bilevel problem is for instance strongly
$\mathcal{NP}$-hard, but belongs to $\mathcal{NP}$
and is therefore not $\Sigma_2^P$-hard.

\section{Complexity of near-optimal robust bilevel problems}\label{sec:simplebilevel}

We establish in this section complexity results for near-optimal robust bilevel problems
for which the lower level $\mathcal{L}$ is a single-level problem parameterized by the
upper-level decision.
(NORBiP) can be reformulated by replacing each $k$-th semi-infinite
constraint \eqref{eq:semiinf} with the lower-level solution $z_k$ in
$\mathcal{Z}(x;\delta)$ that yields the highest value of $G_k(x, z_k)$:
\begin{subequations}\label{prob:norbip2}
\begin{align}
\min_{x,v}\,\, & F(x, v) \\
\text{s.t.}\,\,\,& \eqref{eq:upfeas}-\eqref{eq:lowopt}\\
& G_k(x, z_k) \leq 0 & \forall k \in \left[\![m_u\right]\!]\\
& z_k \in \argmax_{y\in\mathcal{Y}}\{G_k(x,y) \text{ s.t. } f(x, y) \leq f(x, v) + \delta, g(x, y) \leq 0\} & \forall k \in \left[\![m_u\right]\!]\label{eq:advprob}
\end{align}
\end{subequations}

From a game-theoretical perspective, the near-optimal robust
version of a bilevel problem can be seen as a three-player
hierarchical game. The upper level $\mathcal{U}$ and lower level $\mathcal{L}$
are identical to the original bilevel problem.
The third level is the adversarial problem $\mathcal{A}$ and selects
the worst near-optimal lower-level solution with respect to
upper-level constraints,
as represented by the embedded maximization in Constraint
\eqref{eq:advprob}. If the upper-level problem has multiple
constraints, the adversarial problem can be decomposed into problems
$\mathcal{A}_k, k \in \left[\![m_u\right]\!]$, where $m_u$ is the
number of upper-level constraints.
The interaction among the three players is depicted in
\cref{fig:norbib101}.
The blue dashed arcs represent a parameterization of the source
vertex by the decisions of the destination vertex, and the solid
red arcs represent an anticipation of the destination vertex decisions
in the problem of the source vertex. This convention will be used in
all figures throughout the paper.\\

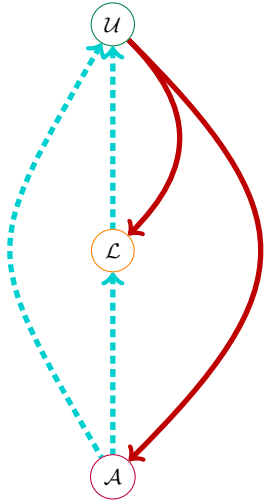
\begin{figure}[ht]
    \centering
\begin{tikzpicture}
	\begin{pgfonlayer}{nodelayer}
		\node [style={node_lower}] (0) at (0, 0) {$\mathcal{A}$};
		\node [style={node_style}] (1) at (0, 6) {$\mathcal{U}$};
		\node [style={node_intermediate}] (2) at (0, 3) {$\mathcal{L}$};
	\end{pgfonlayer}
	\begin{pgfonlayer}{edgelayer}
		\draw [style={edge_anticipate}] (1) to (2);
		\draw [style={edge_param}, in=45, out=-45, looseness=1.25] (1) to (2);
		\draw [style={edge_anticipate}, bend right, looseness=1.50] (1) to (0);
		\draw [style={edge_param}, bend left=45, looseness=1.50] (1) to (0);
		\draw [style={edge_anticipate}] (2) to (0);
	\end{pgfonlayer}
\end{tikzpicture}
    \caption{Near-optimal robust bilevel problem}
    \label{fig:norbib101}
\end{figure}

\noindent
The adversarial problem can be split into $m_u$ adversarial problems
as done in \cite{besanccon2019near}, each finding the worst-case with
respect to one of the upper-level constraints.
The canonical problem refers to the optimistic bilevel problem without
near-optimal robustness constraints.
We refer to the variable $v$ as the canonical lower-level decision.\\

The complexity classes of the polynomial hierarchy are only defined
for decision problems. We consider that an optimization problem belongs to
a given class if that class contains the decision problem of determining if there exists a
feasible solution for which the objective value at least as good as a
given bound.

\begin{definition}\label{def:star}
The decision problem associated with an optimization problem is in
$\mathcal{P}^*\left[\mathcal{H}\right]$, with $\mathcal{H}$ a set
of real-valued functions on a vector space $\mathcal{Y}$, iff:
\begin{enumerate}
\item it belongs to $\mathcal{P}$;
\item for any $h \in \mathcal{H}$, the problem with an additional linear
constraint and an objective function set as $h(\cdot)$ is also in $\mathcal{P}$.
\end{enumerate}
\end{definition}

A broad range of problems in $\mathcal{P}$ are also in
$\mathcal{P}^*\left[\mathcal{H}\right]$ for certain sets of functions
$\mathcal{H}$ (see \cref{ex:linear} for linear problems and
linear functions and \cref{ex:unimodular} for some combinatorial
problems in $\mathcal{P}$).
$\mathcal{NP}^*\left[\mathcal{H}\right]$ and
$\Sigma_s^{P*}\left[\mathcal{H}\right]$ are defined in a similar way.
We next consider two examples illustrating these definitions.

\begin{example}\label{ex:linear}
Denoting by $\mathcal{H}_L$ the set of linear functions from the space of
lower-level variables to $\mathbb{R}$, linear optimization
problems are in $\mathcal{P}^{*}\left[\mathcal{H}_L\right]$,
since any given problem with an additional linear constraint and a
different linear objective function is also a linear optimization problem.
\end{example}

\begin{example}\label{ex:unimodular}
Denoting by $\mathcal{H}_L$ the set of linear functions from the space of
lower-level variables to $\mathbb{R}$, combinatorial optimization
problems in $\mathcal{P}$ which can be formulated as linear optimization
problems with totally unimodular matrices are not in
$\mathcal{P}^{*}\left[\mathcal{H}_L\right]$ in general.
Indeed, adding a linear constraint may break the integrality of
solutions of the linear relaxation of the lower-level problem.
\end{example}

$\Sigma_s^{P}$ is the complexity class at the $s$-th level
of the polynomial hierarchy
\cite{stockmeyer1976polynomial,jeroslow1985polynomial},
defined recursively as
$\Sigma_0^{P} = \mathcal{P}$, $\Sigma_1^{P} = \mathcal{NP}$,
and problems of the class $\Sigma_s^{P}$, $s>1$ being solvable in
non-deterministic polynomial time, provided an oracle for
problems of class $\Sigma_{s-1}^{P}$.
In particular, a positive answer to a decision problem in $\mathcal{NP}$
can be verified, given a certificate, in polynomial time.
If the decision problem associated with an optimization problem is in $\mathcal{NP}$,
and given a potential solution, the objective value
of the solution can be compared to a given bound
and the feasibility can be verified in polynomial time.
We reformulate these statements in the following proposition:

\begin{proposition}\label{prop:oracle}
\cite{jeroslow1985polynomial}
An optimization problem is in $\Sigma_{s+1}^P$ if verifying that
a given solution is feasible and attains a given bound can be
done in polynomial time, when equipped with an oracle instantaneously
solving problems in $\Sigma_{s}^P$.
\end{proposition}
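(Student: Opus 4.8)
The plan is to unfold the recursive definition of the polynomial hierarchy recalled just above and realise the associated decision problem as a ``guess a witness, then verify'' procedure relative to a $\Sigma_s^P$ oracle. Recall that $\Sigma_{s+1}^P = \mathcal{NP}^{\Sigma_s^P}$, i.e. the class of decision problems solved by a nondeterministic polynomial-time machine equipped with an oracle for $\Sigma_s^P$, and that the decision problem attached to an optimization problem (in the sense preceding \cref{def:star}) asks whether there is a feasible solution whose objective value attains a prescribed bound $B$.

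First I would describe the nondeterministic oracle machine $M$: on an input consisting of a problem instance together with $B$, the machine $M$ nondeterministically guesses the encoding of a candidate solution $\hat{v}$, then runs the hypothesised polynomial-time verification routine on $(\hat{v}, B)$, issuing a query to the $\Sigma_s^P$ oracle each time that routine needs to resolve a $\Sigma_s^P$ subproblem, and $M$ accepts on that branch if and only if the routine reports that $\hat{v}$ is feasible and attains the bound. Each computation branch is polynomially bounded, since the verification runs in polynomial time when equipped with the oracle and the guessed object has polynomially bounded size.

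Next I would check correctness. If the answer is ``yes'', some feasible $v^\star$ attains $B$, and on the branch of $M$ guessing $\hat{v} = v^\star$ the routine accepts, so $M$ accepts; conversely, if $M$ accepts on some branch, the routine accepted on the guessed $\hat{v}$, which by hypothesis certifies feasibility and attainment of the bound, so the answer is ``yes''. Hence $M$ decides the problem, placing it in $\mathcal{NP}^{\Sigma_s^P} = \Sigma_{s+1}^P$.

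The step needing care — and the reason the statement carries an implicit assumption on the encoding — is that the candidate solution must admit a certificate of size polynomial in the input, so that the nondeterministic guessing step stays within the time bound; for the multilevel problems studied here this holds because the decision vectors have dimension polynomial in the input size. With that proviso the argument is just an instantiation of the standard ``guess the witness, verify with a lower-level oracle'' template that defines each level of the polynomial hierarchy, so I do not expect any genuine obstacle beyond making this encoding hypothesis explicit.
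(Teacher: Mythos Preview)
The paper does not supply its own proof of this proposition; it simply attributes the statement to Jeroslow \cite{jeroslow1985polynomial} and uses it as a black box throughout. Your argument is the standard and correct unfolding of $\Sigma_{s+1}^P = \mathcal{NP}^{\Sigma_s^P}$ via ``guess a polynomial-size witness, then verify with the oracle,'' and your remark about the implicit polynomial-size encoding assumption is well placed.
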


\noindent
\cref{prop:oracle} is the main property of the classes
of the polynomial hierarchy used to determine the complexity of
near-optimal robust bilevel problems in various settings throughout
this paper.

\begin{lemma}
Given a bilevel problem in the form of Problem \eqref{prob:bilevelstandardopt}, if
the lower-level problem is in $\mathcal{P}^*\left[\mathcal{H}\right]$, and
\begin{equation*}
-G_k(x,\cdot) \in \mathcal{H}\,\,\forall x, \forall k \in \left[\![m_u\right]\!],
\end{equation*}
then the adversarial problem \eqref{eq:advprob} is in $\mathcal{P}$.
\end{lemma}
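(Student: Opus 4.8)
\emph{Proof proposal.} The plan is to recognize, for each $k$, the $k$-th adversarial problem $\mathcal{A}_k$ as the lower-level problem of \eqref{prob:bilevelstandardopt} with one extra linear constraint and a replaced objective, once the decisions of the levels above it are fixed, and then to read off membership in $\mathcal{P}$ directly from the second defining property of $\mathcal{P}^*\left[\mathcal{H}\right]$.

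First I would fix $k \in \left[\![m_u\right]\!]$ and, since $\mathcal{A}_k$ sits at the bottom of the hierarchy (\cref{fig:norbib101}), treat the upper-level decision $x$ and the canonical lower-level decision $v$ as part of its input; in particular the scalar $f(x,v)+\delta$ is a fixed constant for $\mathcal{A}_k$. Using $\argmax_{y} G_k(x,y) = \argmin_{y}\,(-G_k(x,y))$, the problem in Constraint \eqref{eq:advprob} can be rewritten as
\begin{equation*}
\min_{y\in\mathcal{Y}}\ -G_k(x,y)\quad\text{s.t.}\quad g_i(x,y)\le 0\ \ \forall i\in\left[\![m_l\right]\!],\qquad f(x,y)\le f(x,v)+\delta.
\end{equation*}
Its feasible region is exactly the lower-level feasible region ($y\in\mathcal{Y}$, $g_i(x,y)\le0$) intersected with the single near-optimality cut $f(x,y)\le f(x,v)+\delta$, which, $f(x,\cdot)$ being linear, is precisely an ``additional linear constraint'' in the sense of \cref{def:star}.

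Next I would invoke \cref{def:star}. By hypothesis the lower-level problem is in $\mathcal{P}^*\left[\mathcal{H}\right]$, and the hypothesis $-G_k(x,\cdot)\in\mathcal{H}$ says exactly that the objective of the reformulated $\mathcal{A}_k$ is an admissible function of $\mathcal{H}$. Property (2) of the definition then guarantees that the lower-level problem equipped with this one extra linear constraint and this objective is in $\mathcal{P}$; but that problem is $\mathcal{A}_k$. Hence $\mathcal{A}_k\in\mathcal{P}$ for every $k$, and since the adversarial problem \eqref{eq:advprob} decomposes into only $m_u$ such problems, solving it in full is still polynomial.

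I do not expect a genuine technical obstacle here: once \cref{def:star} is available the argument is essentially immediate. The only point that must be handled with care — and the reason the weaker assumption ``the lower level is in $\mathcal{P}$'' would not be enough — is that appending the near-optimality cut to a problem in $\mathcal{P}$ need not preserve membership in $\mathcal{P}$ (see \cref{ex:unimodular}); the strengthening to $\mathcal{P}^*\left[\mathcal{H}\right]$ is designed precisely so that this step goes through, and the bookkeeping reduces to checking that $-G_k(x,\cdot)$ is an admissible objective and that $f(x,y)\le f(x,v)+\delta$ is the admissible additional linear constraint.
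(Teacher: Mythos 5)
There is a genuine gap at the step where you declare the near-optimality cut $f(x,y)\le f(x,v)+\delta$ to be ``precisely an additional linear constraint in the sense of \cref{def:star}''. You justify this by writing ``$f(x,\cdot)$ being linear'', but the lemma makes no such assumption: it applies to a general bilevel problem \eqref{prob:bilevelstandardopt} whose lower level is merely in $\mathcal{P}^*\left[\mathcal{H}\right]$, and the very next theorem in the paper applies it to convex (non-linear) lower-level objectives. When $f(x,\cdot)$ is not affine, the cut $f(x,y)\le f(x,v)+\delta$ is not a linear constraint, so property (2) of \cref{def:star} — which only licenses the addition of a \emph{linear} constraint together with a new objective from $\mathcal{H}$ — does not apply to your reformulation, and the argument stops there.

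The missing idea is the paper's epigraph reformulation of the lower level: introduce an auxiliary scalar $u$ and rewrite $\mathcal{L}$ as minimizing $u$ subject to $f(x,y)-u\le 0$ and $g(x,y)\le 0$, with optimal value $w$. The constraint involving $f$ is then already part of the lower-level problem (so it costs nothing under \cref{def:star}), and the adversarial problem $\mathcal{A}_k$ is obtained from this epigraph problem by changing the objective to $-G_k(x,\cdot)\in\mathcal{H}$ and adding the single genuinely linear constraint $u\le w$. With that reformulation the appeal to property (2) of \cref{def:star} goes through exactly as you intend, and the rest of your argument (fixing $x$ and $v$ as data, decomposing into $m_u$ problems) is fine. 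Your closing remark correctly identifies \emph{why} $\mathcal{P}^*\left[\mathcal{H}\right]$ rather than $\mathcal{P}$ is needed, but the proof as written only covers the case of a linear lower-level objective.
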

\begin{proof}
The lower-level problem can equivalently be written in an epigraph form:
\begin{align*}
(v,w) \in \argmin_{y,u}\,\, & u\\
\text{s.t.}\,\,\,& f(x,y) - u \leq 0 \\
& g(x, y) \leq 0.
\end{align*}
Given a solution of the lower-level problem $(v,w)$ and an
upper-level constraint $G_k(x,y) \leq 0$, the adversarial
problem is defined by:
\begin{align*}
\min_{y,u}\,\, & -G(x,y)\\
\text{s.t.}\,\,\,& f(x,y) - u \leq 0\\
& g(x, y) \leq 0\\
& u \leq w.
\end{align*}
Compared to the lower-level problem, the adversarial problem contains
an additional linear constraint $u \leq w$ and an objective function
updated to $-G(x,\cdot)$.
\qed
\end{proof}

\begin{theorem}\label{theo:bilevelnp}
Given a bilevel problem $(P)$, if there exists $\mathcal{H}$
such that the lower-level problem is in
$\mathcal{NP}^*\left[\mathcal{H}\right]$ and
\begin{equation*}
-G_k(x, \cdot) \in \mathcal{H}\,\, \forall k \in \left[\![m_u\right]\!], \forall x \in \mathcal{X},
\end{equation*}

\noindent
then the near-optimal robust version of the bilevel problem is in 
$\Sigma_{2}^P$ like the canonical bilevel problem.
\end{theorem}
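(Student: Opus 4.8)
The plan is to invoke \cref{prop:oracle} with $s=1$: it suffices to exhibit a polynomial-time procedure, equipped with an oracle for $\mathcal{NP}=\Sigma_1^P$, that verifies a candidate solution of the near-optimal robust problem (written in the form \eqref{prob:norbip2}) is feasible and attains a given bound $B$. As certificate I would take only the upper-level decision $x$ and the canonical lower-level decision $v$; the adversarial responses $z_k$ need not be guessed. The verification then splits into three tasks: (i) evaluating the problem data to check $x\in\mathcal{X}$, the upper-level constraints \eqref{eq:upfeas}, and the bound $F(x,v)\le B$; (ii) checking that $v$ satisfies \eqref{eq:lowopt}, i.e. that it is an optimal lower-level response to the parameter $x$; and (iii) checking, for each $k\in\left[\![m_u\right]\!]$, that the near-optimal robustness constraint \eqref{eq:semiinf} holds.

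The heart of the argument is task (iii), where I would reuse the epigraph reformulation from the preceding lemma. Writing the lower-level problem with an auxiliary variable $u$ and the constraint $f(x,y)-u\le 0$, and setting $w=f(x,v)$, the near-optimal set $\mathcal{Z}(x;\delta)$ is exactly the feasible region of that epigraph problem augmented with the single linear constraint $u\le w+\delta$, while the adversarial problem \eqref{eq:advprob} is that same problem with objective changed to $-G_k(x,\cdot)$. Since $-G_k(x,\cdot)\in\mathcal{H}$ by hypothesis and the lower-level problem lies in $\mathcal{NP}^*\left[\mathcal{H}\right]$, item~2 of \cref{def:star} gives that the adversarial problem is in $\mathcal{NP}$. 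Hence the decision question ``does there exist $y\in\mathcal{Z}(x;\delta)$ with $G_k(x,y)>0$?'' is an $\mathcal{NP}$ problem that the oracle resolves in one call, and \eqref{eq:semiinf} holds for $k$ precisely when the answer is negative; iterating over the $m_u$ constraints costs $m_u$ oracle calls.

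For task (ii), the lower-level problem is in $\mathcal{NP}^*\left[\mathcal{H}\right]\subseteq\mathcal{NP}$, so checking feasibility of $v$ is a single $\mathcal{NP}$-type question and the query ``is there a lower-level-feasible $y$ with $f(x,y)<f(x,v)$?'' is also in $\mathcal{NP}$; then $v$ satisfies \eqref{eq:lowopt} iff $g(x,v)\le 0$ and that query returns ``no'', i.e. a constant number of oracle calls. Combining (i)--(iii), the whole verification runs in polynomial time with a $\Sigma_1^P$ oracle, so \cref{prop:oracle} places the near-optimal robust problem in $\Sigma_2^P$; the canonical problem \eqref{prob:bilevelstandardopt} is in $\Sigma_2^P$ by the same argument restricted to tasks (i)--(ii), which yields the ``like the canonical bilevel problem'' part.

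I expect the only delicate point to be task (iii): one must ensure the near-optimality budget really enters as an \emph{additional linear constraint} in the sense of \cref{def:star} --- which is exactly why the epigraph trick (moving the nonlinearity of $f$ into an inequality against a fresh variable) is needed --- and that the objective swap to $-G_k$ is covered by the hypothesis $-G_k(x,\cdot)\in\mathcal{H}$. If the lower level were merely in $\mathcal{NP}$ without the $\mathcal{NP}^*\left[\mathcal{H}\right]$ closure, the adversarial problem could climb to $\Sigma_2^P$ and the verification would then only certify membership in $\Sigma_3^P$; it is precisely closure under \cref{def:star} that prevents this level jump. I would also state explicitly, as a standing assumption, that $F$, the $G_k$, $f$ and $g$ are polynomial-time evaluable, which is implicit in the problem being a well-defined member of a class of the polynomial hierarchy.
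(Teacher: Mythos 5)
Your proposal is correct and follows essentially the same route as the paper: verify the candidate $(x,v)$ in polynomial time with an $\mathcal{NP}$ oracle, using the epigraph reformulation from the preceding lemma so that the near-optimality budget becomes a single additional linear constraint and the objective swap to $-G_k(x,\cdot)$ is covered by the $\mathcal{NP}^*\left[\mathcal{H}\right]$ hypothesis, then conclude via \cref{prop:oracle}. Your version is somewhat more explicit than the paper's (phrasing the optimality and robustness checks as decision queries to the oracle, and flagging the polynomial-time evaluability assumption), but the decomposition and the key ideas are the same.
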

\begin{proof}
The proof relies on the ability to verify that a given solution $(x, v)$
results in an objective value at least as low as a bound $\Gamma$ according to
\cref{prop:oracle}. This verification can be carried out with the following steps: %%M note: "we then proceed to the next steps" would not convey the same idea here

\begin{enumerate}
    \item Compute the upper-level objective value $F(x, v)$ and verify that $F(x, v) \leq \Gamma$;
    \item Verify that upper-level constraints are satisfied;
    \item Verify that lower-level constraints are satisfied;
    \item Compute the optimum value $\mathcal{L}(x)$ of the lower-level problem parameterized by $x$ and check if:
\begin{equation*}
    f(x,v) \leq \min_{y} \mathcal{L}(x);
\end{equation*}
    \item Compute the worst case:\\
\noindent
Find
\begin{equation*}
    z_k \in \argmax_{y\in\mathcal{Y}}\, \mathcal{A}_k(x, v)\,\,\, \forall k \in \left[\![m_u\right]\!];
\end{equation*}
\noindent
where $\mathcal{A}_k(x, v)$ is the $k$-th adversarial problem parameterized by $(x,v)$;
    \item Verify near-optimal robustness: $\forall k \in \left[\![m_u\right]\!]$,
    verify that the $k$-th upper-level constraint is feasible for the worst-case $z_k$.
\end{enumerate}

\noindent
Steps 1 and 2 can be carried out in polynomial time by assumption.
Step 3 requires to check the feasibility of a solution to a problem
in $\mathcal{NP}$. This can be done in polynomial time.
Step 4 consists in solving the lower-level problem,
while Step 5 corresponds to solving $m_u$ problems similar to the lower
level, with the objective function modified and an additional
linear constraint ensuring near-optimality.
\qed
\end{proof}

\begin{theorem}
Given a bilevel problem $(P)$, if the lower-level problem is convex and
in $\mathcal{P}^*\left[\mathcal{H}\right]$ with $\mathcal{H}$ a set of
convex functions, and if the upper-level constraints are such that
$-G_k(x,\cdot) \in \mathcal{H}$, then the near-optimal robust version
of the bilevel problem is in $\mathcal{NP}$.
If the upper-level constraints are convex non-affine with respect to
the lower-level constraints, the near-optimal robust version is in general
not in $\mathcal{NP}$.
\end{theorem}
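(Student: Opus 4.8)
The plan is to treat the positive statement as the polynomial-time analogue of \cref{theo:bilevelnp}: I would reuse the six-step verification from its proof verbatim, but observe that under the present hypotheses none of the steps needs the $\Sigma_1^P$ oracle that produced the $\Sigma_2^P$ bound there. First, take a candidate pair $(x,v)$ of polynomial encoding size as the $\mathcal{NP}$-certificate. Steps~1--3 (the objective bound $F(x,v)\le\Gamma$, upper-level feasibility, lower-level feasibility) run in polynomial time as before; in fact lower-level feasibility is now a direct polynomial check since the lower level is convex and in $\mathcal{P}$. For Step~4, because the lower level is convex and lies in $\mathcal{P}^*[\mathcal{H}]\subseteq\mathcal{P}$, its optimal value $\mathcal{L}(x)$ is computable in polynomial time, so the near-optimality test $f(x,v)\le\mathcal{L}(x)$ (which together with feasibility forces $f(x,v)=\mathcal{L}(x)$) is polynomial. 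For Step~5, by the lemma showing that \eqref{eq:advprob} is in $\mathcal{P}$, each adversarial problem $\mathcal{A}_k(x,v)$ is the lower-level problem with one extra linear near-optimality cut and objective $-G_k(x,\cdot)$; here $-G_k(x,\cdot)\in\mathcal{H}$ with $\mathcal{H}$ a set of convex functions, and $f(x,\cdot),g(x,\cdot)$ are convex, so $\mathcal{A}_k(x,v)$ is a convex program, hence solvable in polynomial time by membership of the lower level in $\mathcal{P}^*[\mathcal{H}]$. Step~6 just evaluates $G_k(x,z_k)\le 0$ at the returned maximizers. Thus the whole verification is polynomial, and by \cref{prop:oracle} (with $s=0$) the near-optimal robust problem is in $\mathcal{NP}$; it need not be in $\mathcal{P}$ in general, since it contains the $\mathcal{NP}$-hard pessimistic linear bilevel problem as a special case.

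For the negative statement I would argue contrapositively and exhibit a hard family. When some $G_k(x,\cdot)$ is convex but not affine, $-G_k(x,\cdot)$ is concave non-affine and therefore outside the convex class $\mathcal{H}$, so the lemma underlying Step~5 no longer applies; worse, the worst-case subproblem becomes the maximization of a convex function over the convex near-optimal set $\mathcal{Z}(x;\delta)$, i.e. a convex-maximization problem, which is $\mathcal{NP}$-hard in general. I would make this precise by encoding a known $\mathcal{NP}$-complete convex-maximization question --- e.g. deciding whether $\max\{\,z^\top Q z : z\in\mathcal{Z}(x;\delta)\,\}\ge t$, via a reduction from independent set/clique or a subset-sum-type gadget --- into a single upper-level constraint $G_k(x,z)\le 0$, while keeping the lower level a linear (hence convex, polynomially solvable) program whose near-optimal set $\mathcal{Z}(x;\delta)$ is exactly the polytope on which the hard maximization is posed, and arranging the remaining data so that feasible upper-level solutions with objective at most the bound exist precisely when the encoded instance is negative. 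Then deciding near-optimal-robust feasibility of a candidate is co-$\mathcal{NP}$-hard, so the near-optimal robust decision problem is co-$\mathcal{NP}$-hard and hence not in $\mathcal{NP}$ unless $\mathcal{NP}=\text{co-}\mathcal{NP}$; conversely, \cref{prop:oracle} equipped with an oracle for the $\Sigma_1^P$ adversarial verification places it in $\Sigma_2^P$, so the jump is genuine assuming the polynomial hierarchy does not collapse.

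The main obstacle is the reduction behind the negative part: one must pick the gadget so that (i) $\mathcal{Z}(x;\delta)$ coincides with the domain of an $\mathcal{NP}$-hard convex maximization --- the natural choice being a convex quadratic such as $\sum_i(z_i-\tfrac12)^2$ or $z^\top Q z$ maximized over a polytope cut by the near-optimality budget $f(x,z)\le f(x,v)+\delta$ --- (ii) the lower-level problem stays convex and polynomially solvable, so that the positive-side machinery cannot be applied to the same instance, and (iii) the upper-level objective and the other constraints neither trivialize nor over-constrain the instance, so that the reduction is genuinely to the decision problem of the near-optimal robust bilevel problem rather than a vacuous one. The routine parts --- checking that Steps~1--6 run in polynomial time and that the adversarial convex program is well posed --- follow the template of \cref{theo:bilevelnp} and the preceding lemma and require no new ideas.
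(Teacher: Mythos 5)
Your positive part follows the paper's proof essentially verbatim: the same six-step verification as in \cref{theo:bilevelnp}, with every step now polynomial because the lower level is in $\mathcal{P}$, each adversarial problem is the convex program \eqref{prob:advconvex} (maximizing the concave $G_k(x,\cdot)$, i.e.\ with $-G_k(x,\cdot)\in\mathcal{H}$) solvable in polynomial time by membership of the lower level in $\mathcal{P}^*\left[\mathcal{H}\right]$ together with the epigraph trick of the preceding lemma, and \cref{prop:oracle} with a $\Sigma_0^P$ oracle then gives membership in $\mathcal{NP}$. There is no divergence there.

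For the negative part your route is genuinely different from, and more ambitious than, the paper's. The paper contents itself with observing that when $G_k(x,\cdot)$ is convex non-affine, the adversarial problem \eqref{prob:advconvex} maximizes a convex non-affine function over a convex set, which is $\mathcal{NP}$-hard in general, so the polynomial-time verification no longer goes through; it then notes the problem still lies in $\Sigma_2^P$. It does not construct a reduction. You correctly sense that the failure of one particular verification procedure does not by itself exclude membership in $\mathcal{NP}$, and you propose to establish co-$\mathcal{NP}$-hardness of the robust-feasibility check by embedding an $\mathcal{NP}$-hard convex quadratic maximization over a polyhedral near-optimal set into a single upper-level constraint. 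That is the right way to make ``not in $\mathcal{NP}$ unless the hierarchy collapses'' rigorous, but you leave the gadget unexecuted: arranging the quantifier structure so that the bilevel instance is a yes-instance exactly when the encoded maximization instance is a no-instance, and choosing $f$, $\delta$ and the remaining data so that the lower level stays polynomially solvable, is precisely where the work lies. Since the theorem only claims the property ``in general'', the paper's weaker observation is all it actually proves; your sketch, if completed, would establish something stronger than what the paper does.
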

\begin{proof}
If the upper-level constraints are concave with respect to
the lower-level variables, the adversarial problem defined as:
\begin{subequations}\label{prob:advconvex}
\begin{align}
\max_{y\in\mathcal{Y}} &\,\, G_k(x, y) \\
\text{s.t.}\,\,\,& g(x, y) \leq 0 \\
& f(x, y) \leq f(x, v) + \delta
\end{align}
\end{subequations}
is convex. Furthermore, by definition of $\mathcal{P}^*\left[\mathcal{H}\right]$, 
the adversarial problem is in $\mathcal{P}$.
\\

Applying the same reasoning as in the proof of
\cref{theo:bilevelnp}, Steps 1-3 are identical and can
be carried out in polynomial time.
Step 4 can be performed in polynomial time since $\mathcal{L}$ is in $\mathcal{P}$.
Step 5 is also performed in polynomial time since
$\forall k \in \left[\![m_u\right]\!]$, each $k$-th adversarial
problem \eqref{prob:advconvex} is a convex problem that can be
solved in polynomial time since $\mathcal{L}$ is in
$\mathcal{P}^*\left[\mathcal{H}\right]$.
Step 6 simply is a simple comparison of two quantities.\\

If the upper-level constraints are convex non-affine with respect to
the lower-level variables,
Problem \eqref{prob:advconvex} maximizes a convex non-affine
function over a convex set. Such a problem is $\mathcal{NP}$-hard in general.
Therefore, the verification that a given solution is feasible and
satisfies a predefined bound on the objective value requires solving the
$m_u$ $\mathcal{NP}$-hard adversarial problems.
If $\mathcal{L}$ is in $\mathcal{NP}^*\left[\mathcal{H}\right]$,
then these adversarial problems
are in $\mathcal{NP}$ by \cref{prob:norbip},
and the near-optimal robust problem is in $\Sigma_{2}^P$ according to
\cref{prop:oracle}.
\qed
\end{proof}

% If the lower-level can be solved in polynomial time,
% and adding a linear constraint and modified objective preserves
% the polynomial resolution time, then the overall process required 
% to verify a solution is in polynomial time.
% The near-optimal robust version of a bilevel optimization problem
% is therefore in $\mathcal{NP}$, as its canonical bilevel problem.
% \\

% If the lower-level problem is in another complexity class,
% but that changing the objective function and adding a constraint
% of type $f(x,y) \leq r$, with $f$ the initial objective function
% and $r$ a constant, does not change the complexity class, then
% verifying the feasibility of a solution to the near-optimal robust
% problem  is not in a different complexity class from the original
% bilevel optimistic problem.\\

% One key hypothesis is that changing the linear objective value and 
% adding an additional linear constraint does not make the problem
% harder to solve in terms of complexity.
% This would have been the case for instances when solving 
% combinatorial problems with total unimodular matrices, such as
% minimum spanning trees or bi-partite matching.

\section{Complexity of near-optimal robust mixed-integer multilevel problems}\label{sec:multilevel}

In this section, we study the complexity of a near-optimal
robust version of mixed-integer multilevel linear problems
(MIMLP), where the lower level itself is a
$s$-level problem and is $\Sigma_s^P$-hard.
The canonical multilevel problem is, therefore, $\Sigma_{s+1}^P$-hard
\cite{jeroslow1985polynomial}. For some instances of mixed-integer bilevel,
the optimal value can be approached arbitrarily
but not reached \cite{moore1990mixed}.
To avoid such pathological cases, we restrict our attention to
multilevel problems satisfying the criterion for mixed-integer bilevel
problems from \cite{fischetti2017new}:

\begin{property}\label{prop:mixed}
The continuous variables at any level $s$ do not appear in
the problems at levels that are lower than $s$ (the levels deciding after $s$).
\end{property}

More specifically, we will focus on mixed-integer multilevel
linear problems where the upper-most lower level $\mathcal{L}_1$
may pick a solution deviating from the optimal value, while
we ignore deviations of the levels $\mathcal{L}_{i>1}$.
This problem is noted ($\text{NOMIMLP}_s$) and depicted in \cref{fig:norbib101}.

\begin{figure}[h]
    \centering
\begin{tikzpicture}
	\begin{pgfonlayer}{nodelayer}
		\node [style={node_style}] (3) at (2.5, 9) {$\mathcal{U}$};
		\node [style={node_style}] (2) at (2.5, 6) {$\mathcal{L}_1$};
		\node [style={node_intermediate}] (1) at (0, 3) {$\mathcal{L}_2$};
		\node [style={node_lower}] (0) at (5, 3) {$\mathcal{A}$};
	\end{pgfonlayer}
	\begin{pgfonlayer}{edgelayer}
		\draw [style={edge_anticipate}] (3) to (2);
		\draw [style={edge_anticipate}] (2) to (1);
		\draw [style={edge_anticipate}] (3) to (1);
		\draw [style={edge_anticipate}] (3) to (0);
		\draw [style={edge_param}, in=30, out=-30, looseness=1.25] (3) to (2);
		\draw [style={edge_param}, in=90, out=-150, looseness=1.25] (3) to (1);
		\draw [style={edge_param}, in=90, out=-30, looseness=1.25] (3) to (0);
		\draw [style={edge_param}, in=15, out=-90, looseness=1.25] (2) to (1);
		\draw [style={edge_anticipate}] (2) to (0);
	\end{pgfonlayer}
\end{tikzpicture}
    \caption{Near-optimality robustness for multilevel problems}
    \label{fig:integer}
\end{figure}
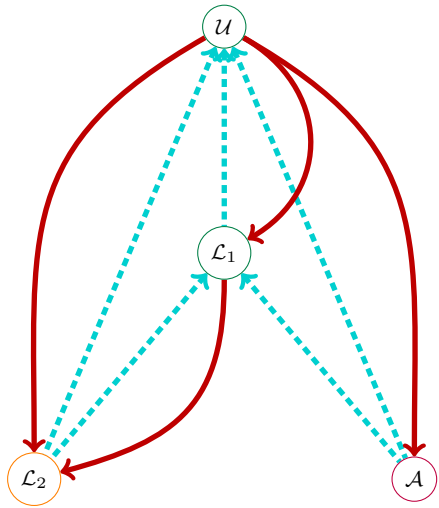
\noindent
% The canonical integer multilevel problem with $s+1$ levels is stated as:

% \begin{align*}
% \min_{x_U,v_{L_1}, v_{L_2}, v_{L_3}...} & c_U \cdot x_U + \sum_{i=1}^s c_i \cdot v_i\\
% & G x_u + H^1 v_1 + H^2 v_2 + ... \leq q\\
% & x \in \mathbb{Z}^{n_u}\\
% & v_1 \in \argmin_{y_1} &&  d^1_1 \cdot y_1 + \sum_{i=2}^s d^1_i \cdot v_i \\
% & && A x_U + B^1_1 y_1 + \sum_{i=2}^s B^1_i \leq b^1\\
% & && y_1 \in \mathbb{Z}^{n_1}\\
% & && v_2 \in \argmin_{y_2} && d^2_2 \cdot y_2 + \sum_{i=3}^s d^2_i \cdot v_i\\
% & && && A x_U + B^2_1 y_1 + B^2_2 y_2 + \sum_{i=3}^s B^2_i \leq b^2 
% \end{align*}

% not necessary?
The adversarial problem corresponds to a decision of the level
$\mathcal{L}_1$ different from the canonical decision.
This decision induces a different reaction from
the subsequent levels $\mathcal{L}_2$, $\mathcal{L}_3$.
Since the top-level constraints depend on the joint reaction
of all following levels, we will note $z_{ki} = (z_{k1}, z_{k2}, z_{k3})$
the worst-case joint near-optimal solution of all lower levels
with respect to the top-level constraint $k$.

% We assume that the problem solved by $\mathcal{L}_1$
% is in $\Sigma_{s}^{P*}$
% adding a linear constraint and modifying the
% objective of the lower-level does not change the problem into
% one of complexity class higher in the polynomial hierarchy.
% This class of problems encompasses for instance sequential games
% with only integer variables at each level and linear constraints.
% The canonical version of ($\textit{NOIMLP}_s$) is in $\Sigma_{s+1}^p$.
% .

\begin{theorem}\label{theo:NOMIMLP}
If $\mathcal{L}_1$ is in $\Sigma_{s}^{P*}\left[\mathcal{H}_L\right]$,
the decision problem associated with ($\text{NOMIMLP}_s$) is in
$\Sigma_{s+1}^P$ as the canonical multilevel problem.
\end{theorem}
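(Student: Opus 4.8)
The plan is to mirror the verification-based argument used in the proof of \cref{theo:bilevelnp}, but now accounting for the fact that the lower level $\mathcal{L}_1$ is itself a $\Sigma_s^P$-hard multilevel problem rather than an $\mathcal{NP}$ problem. By \cref{prop:oracle}, it suffices to show that, equipped with an oracle for $\Sigma_s^P$, one can verify in polynomial time that a candidate $(x, v)$ is feasible for $(\text{NOMIMLP}_s)$ and attains a given bound $\Gamma$ on the upper-level objective. The certificate will consist of the upper-level decision $x$, the canonical joint lower-level response $v = (v_1, v_2, v_3)$, and the worst-case joint near-optimal responses $z_{ki}$ for each top-level constraint $k$.

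First I would lay out the verification steps analogous to Steps 1--6 in \cref{theo:bilevelnp}. Steps 1--2 (evaluating $F(x,v) \leq \Gamma$ and checking the top-level constraints) are polynomial since all data are linear. Step 3, checking that $v$ is feasible for the multilevel chain $\mathcal{L}_1, \mathcal{L}_2, \mathcal{L}_3$, reduces to feasibility checks that the $\Sigma_s^P$ oracle handles (feasibility for a $\Sigma_s$-type problem is at most as hard as the problem itself). Step 4 is the crucial use of the oracle: compute the optimal value $\mathcal{L}_1(x)$ of the canonical lower-level multilevel problem parameterized by $x$ and verify $f(x, v) \leq \mathcal{L}_1(x)$; this is exactly one call to the $\Sigma_s^P$ oracle by hypothesis. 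Step 5 is where the assumption $\mathcal{L}_1 \in \Sigma_s^{P*}[\mathcal{H}_L]$ (rather than merely $\Sigma_s^P$) does its work: each adversarial problem $\mathcal{A}_k$ is the lower-level multilevel problem with its objective replaced by the linear function $-G_k(x,\cdot)$ and augmented by the single linear near-optimality constraint $f(x,y) \leq f(x,v) + \delta$ (in epigraph form, $u \leq w$ as in the lemma); the $*$-closure property guarantees this modified problem is still in $\Sigma_s^P$, hence solvable with one oracle call to find $z_{ki}$. Step 6 is a finite set of scalar comparisons $G_k(x, z_{ki}) \leq 0$.

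The point requiring care is Step 5, and it comes in two pieces. The first is purely structural: one must argue that attaching the linear near-optimality cut and swapping in a linear objective preserves the hierarchical structure of $\mathcal{L}_1$ exactly as the $*$-operator demands — i.e. the added constraint involves only variables of $\mathcal{L}_1$ (and the already-fixed quantities $x$, $f(x,v)$, $\delta$), so it does not perturb the anticipation/parameterization pattern between $\mathcal{L}_1$ and the deeper levels $\mathcal{L}_2, \mathcal{L}_3$; this is precisely why \Cref{prop:mixed} is imposed, ensuring the continuous deviation of $\mathcal{L}_1$ does not leak into the lower problems in a way that would change their feasible sets pathologically. The second is the bookkeeping of the joint worst case: since the top-level constraint $k$ depends on the full reaction $z_{ki} = (z_{k1}, z_{k2}, z_{k3})$, the adversarial problem $\mathcal{A}_k$ must be read as a multilevel problem whose topmost deviating level optimizes $-G_k$ subject to near-optimality, with $\mathcal{L}_2, \mathcal{L}_3$ reacting underneath — so it is genuinely a $(s)$-level problem of the same type, and the $\Sigma_s^{P*}[\mathcal{H}_L]$ hypothesis applies to it verbatim.

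Summing up: the verification calls the $\Sigma_s^P$ oracle once in Step 4 and $m_u$ times in Step 5, with all other steps polynomial; by \cref{prop:oracle} this places $(\text{NOMIMLP}_s)$ in $\Sigma_{s+1}^P$. Combined with the known $\Sigma_{s+1}^P$-hardness of the canonical multilevel problem \cite{jeroslow1985polynomial} and the fact that the canonical problem is a special case of $(\text{NOMIMLP}_s)$ (take $\delta = 0$, or drop the adversarial constraints), the near-optimal robust version sits in the same class $\Sigma_{s+1}^P$ as its canonical counterpart, which is the claim. I expect the main obstacle to be stating the structural preservation in Step 5 cleanly enough that the reader is convinced the $*$-closure genuinely transfers through the multilevel chain; the complexity accounting itself is routine once that is granted.
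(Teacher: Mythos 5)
Your proposal is correct and follows essentially the same route as the paper: a polynomial-time verification procedure equipped with a $\Sigma_s^P$ oracle, where optimality of the canonical lower-level response is checked by one oracle call and each of the $m_u$ adversarial problems is handled by a further oracle call, justified by the $\Sigma_s^{P*}\left[\mathcal{H}_L\right]$ closure under the added linear near-optimality cut and the linear objective $-G_k(x,\cdot)$. Your additional remarks on the role of \cref{prop:mixed} and on the adversarial problem being itself an $s$-level problem of the same type make explicit points the paper leaves implicit, but do not change the argument.
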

\begin{proof}
Given a solution to all levels $(x_U,v_{1}, v_{2},... v_s)$ and a bound $\Gamma$,
verifying that this solution is (i) feasible, (ii) near-optimal robust of parameter
$\delta$, and (iii) has an objective value at least as good as the bound $\Gamma$ can
be done through the following steps:

\begin{enumerate}
    \item Compute the objective value and verify that it is lower than $\Gamma$;
    \item Verify variable integrality;
    \item Solve the problem $\mathcal{L}_1$, parameterized by $x_U$, and verify that the solution $(v_{1}, v_{2},...)$ is optimal;
    \item $\forall k \in \left[\![m_u\right]\!]$,
    solve the $k$-th adversarial problem. Let $z_k=(z_{k1}, z_{k2}... z_{ks})$ be the solution;
    \item $\forall k \in \left[\![m_u\right]\!]$, verify that the $k$-th upper-level
    constraint is feasible for the adversarial solution $z_k$.
\end{enumerate}

Steps 1, 2, and 5 can be performed in polynomial time.
Step 3 requires solving a problem in $\Sigma_{s}^P$, while step 4
consists in solving $m_u$ problems in $\Sigma_{s}^P$,
since $\mathcal{L}_1$ is in $\Sigma_{s}^{P*}\left[\mathcal{H}_L\right]$.
Checking the validity of a solution thus requires solving
problems in $\Sigma_{s}^P$ and is itself in $\Sigma_{s+1}^P$,
like the original problem.
\qed
\end{proof}

\section{Complexity of a generalized near-optimal robust multilevel problem}\label{sec:genmulti}

In this section, we study the complexity of a variant of the
problem presented in \cref{sec:multilevel} with $s+1$
decision-makers at multiple top levels
$\mathcal{U}_1, \mathcal{U}_2, ... \mathcal{U}_s$ and a
single bottom level $\mathcal{L}$.
We denote by $\mathcal{U}_1$ the top-most level.
We assume that the bottom-level entity may choose a solution
deviating from optimality.
This requires that the entities at all $\mathcal{U}_i\, \forall i \in \{1..s\}$
levels anticipate this deviation, thus solving a near-optimal
robust problem to protect their feasibility from it.
The variant, coined $\text{GNORMP}_s$, is illustrated in
\cref{fig:nomultigen}.
We assume throughout this section that \cref{prop:mixed} holds
in order to avoid the unreachability problem previously mentioned.
The decision variables of all upper levels are denoted by $x_{(i)}$,
and the objective functions by $F_{(i)}(x_{(1)}, x_{(2)}... x_{(s)})$.
The lower-level canonical decision is denoted $v$ as in previous sections.

\begin{figure}[ht]
\begin{minipage}{0.4\textwidth}
\centering
\begin{tikzpicture}
\begin{pgfonlayer}{nodelayer}
	\node [style={node_style}] (3) at (0, 9) {$\mathcal{U}_1$};
	\node [style={node_style}] (2) at (0, 6) {$\mathcal{U}_2$};
	\node [style={node_intermediate}] (1) at (0, 3) {$\mathcal{L}$};
	\node [style={node_lower}] (0) at (0, 0) {$\mathcal{A}$};
\end{pgfonlayer}
\begin{pgfonlayer}{edgelayer}
	\draw [style={edge_anticipate}] (3) to (2);
	\draw [style={edge_anticipate}] (2) to (1);
	\draw [style={edge_anticipate}] (1) to (0);
	\draw [style={edge_param}, in=45, out=-45, looseness=1.25] (3) to (2);
	\draw [style={edge_param}, in=120, out=-120, looseness=1.25] (3) to (1);
	\draw [style={edge_param}, in=120, out=-120, looseness=1.25] (3) to (0);
	\draw [style={edge_anticipate}, in=20, out=-20, looseness=1.25] (3) to (1);
	\draw [style={edge_anticipate}, in=20, out=-20, looseness=1.25] (3) to (0);
	\draw [style={edge_param}, in=45, out=-45, looseness=1.25] (2) to (1);
	\draw [style={edge_param}, in=45, out=-45, looseness=1.25] (2) to (0);
	\draw [style={edge_anticipate}] (2) to (0);
\end{pgfonlayer}
\end{tikzpicture}
\caption{Generalized NOR multilevel}\label{fig:nomultigen}
\end{minipage}
\begin{minipage}{0.4\textwidth}
\centering
\begin{tikzpicture}
\begin{pgfonlayer}{nodelayer}
	\node [style={node_style}] (3) at (2.5, 9) {$\mathcal{U}_1$};
	\node [style={node_style}] (2) at (2.5, 6) {$\mathcal{U}_2$};
	\node [style={node_intermediate}] (1) at (2.5, 3) {$\mathcal{L}$};
	\node [style={node_lower}] (0) at (0, 0) {$\mathcal{A}_2$};
	\node [style={node_lower}] (4) at (5, 0) {$\mathcal{A}_1$};
\end{pgfonlayer}
\begin{pgfonlayer}{edgelayer}
	\draw [style={edge_anticipate}] (3) to (4);
	\draw [style={edge_anticipate}] (2) to (4);
	\draw [style={edge_anticipate}] (1) to (4);
	\draw [style={edge_anticipate}] (3) to (0);
	\draw [style={edge_anticipate}] (3) to (2);
	\draw [style={edge_anticipate}] (2) to (1);
	\draw [style={edge_anticipate}] (1) to (0);
	\draw [style={edge_param}, in=45, out=-45, looseness=1.25] (3) to (2);
	\draw [style={edge_param}, in=120, out=-120, looseness=1.25] (3) to (1);
	\draw [style={edge_param}, in=70, out=-40, looseness=1.25] (3) to (4);
	\draw [style={edge_param}, in=45, out=-45, looseness=1.25] (2) to (1);
	\draw [style={edge_param}, in=45, out=-45, looseness=1.25] (2) to (0);
	\draw [style={edge_anticipate}] (2) to (0);
\end{pgfonlayer}
\end{tikzpicture}
\caption{Decoupling of the two adversarial problems}\label{fig:nomultigen2}
\end{minipage}
\end{figure}

If the lowest level $\mathcal{L}$ belongs to $\mathcal{NP}$,
$\mathcal{U}_s$ belongs to $\Sigma_{2}^P$ and the original
problem is in $\Sigma_{s+1}^P$. In a more general multilevel case, if
the lowest level $\mathcal{L}$ solves a problem in $\Sigma_{r}^P$,
$\mathcal{U}_s$ solves a problem in $\Sigma_{r+1}^P$ and $\mathcal{U}_1$ in
$\Sigma_{r+s}^P$. \\

We note that for all fixed decisions
$x_{(i)} \forall i \in \{1..s-1\}$,
$\mathcal{U}_s$ is a near-optimal robust bilevel problem.
This differs from the model presented in \cref{sec:multilevel}
where, for a fixed upper-level decision, the top-most lower level
$\mathcal{L}_1$ is the same parameterized problem as in the canonical
setting. Furthermore, as all levels $\mathcal{U}_i$ anticipate
deviations of the lower-level decision in the near-optimal set,
the worst case can be formulated with respect to the constraints
of each of these levels.
In conclusion, distinct adversarial problems
$\mathcal{A}_{i}\, \forall i \in \{1..s\}$ can be formulated.
Each upper level $\mathcal{U}_i$ integrates the reaction of the
corresponding adversarial problem in its near-optimality
robustness constraint.
This formulation of $\text{GNORMP}_s$ is depicted
\cref{fig:nomultigen2}.

\begin{theorem}\label{theo:gnor}
Given a $s+1$-level problem $\text{GNORMP}_s$, if the bottom-level
problem parameterized by all upper-level decisions
$\mathcal{L}(x_{(1)}, x_{(2)}... x_{(s)})$ is in $\Sigma_{r}^{P*}$,
then $\text{GNORMP}_s$ is in $\Sigma_{r+s}^P$
like the corresponding canonical bilevel problem.
\end{theorem}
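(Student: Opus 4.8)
The plan is to apply \cref{prop:oracle} in exactly the same spirit as the proofs of \cref{theo:bilevelnp,theo:NOMIMLP}: I would exhibit a polynomial-time procedure that, given a candidate assignment $(x_{(1)}, x_{(2)}, \dots, x_{(s)}, v)$ to all levels together with a bound $\Gamma$, certifies that the assignment is feasible, near-optimal robust of parameter $\delta$, and attains objective value $F_{(1)}(x_{(1)},\dots,x_{(s)}) \le \Gamma$, while being allowed to call an oracle for $\Sigma_{r+s-1}^P$. Concluding then amounts to one invocation of \cref{prop:oracle}.

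First I would isolate the ingredients that stay cheap. Evaluating $F_{(1)}$ against $\Gamma$, verifying integrality and domain membership of all variables, and verifying the explicit constraints of every level are all polynomial. Next, I would use the decoupling depicted in \cref{fig:nomultigen2}: each adversarial problem $\mathcal{A}_i$ is the \emph{bottom-level} problem $\mathcal{L}(x_{(1)},\dots,x_{(s)})$ with its objective replaced by the violation of the $i$-th family of $\mathcal{U}_i$-constraints and with the single extra linear constraint $f(x_{(1)},\dots,x_{(s)},y) \le f(x_{(1)},\dots,x_{(s)},v) + \delta$; since $\mathcal{L}$ is in $\Sigma_r^{P*}$ (with respect to $\mathcal{H}_L$) and the $\mathcal{U}_i$-constraints are linear, every $\mathcal{A}_i$ lies in $\Sigma_r^P$. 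Hence each near-optimality robustness check — solve $\mathcal{A}_i$, then test the $i$-th constraint family at the returned worst case $z_i$ — costs one $\Sigma_r^P$ oracle call plus polynomial work, for $i = 1, \dots, s$.

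The core of the argument is the nested optimality check, carried out from the bottom up. I would first verify that $v$ is optimal for $\mathcal{L}(x_{(1)},\dots,x_{(s)})$, which needs a $\Sigma_r^P$ oracle (binary search on the optimal value). Then, for $i = s, s-1, \dots, 2$, I would verify that $(x_{(i)},\dots,x_{(s)},v)$ is optimal for the problem solved by $\mathcal{U}_i$ given the decisions above it fixed; this is a bilevel problem whose lower level is the already-validated problem of $\mathcal{U}_{i+1}$, augmented with the cheap near-optimality robustness constraints relative to $\mathcal{L}$, so an induction resting on \cref{theo:bilevelnp} (with $\mathcal{NP}^*$ replaced by the appropriate $\Sigma_k^{P*}$ and the extra $\Sigma_r^P$ adversarial absorbed) places it in $\Sigma_{r+s-i+1}^P$, and optimality of the candidate can be tested with one oracle for that class. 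The highest class invoked is reached at $i=2$, namely $\Sigma_{r+s-1}^P$; the top level $\mathcal{U}_1$ requires no optimality check, only the feasibility, near-optimality-robustness, and bound checks already described. Every oracle call therefore lies in $\Sigma_{r+s-1}^P$, so by \cref{prop:oracle} $\text{GNORMP}_s \in \Sigma_{r+s}^P$; dropping the near-optimality-robustness checks yields the same bound for the canonical multilevel problem, which settles the claim.

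I expect the delicate point to be the inductive complexity bookkeeping for the intermediate levels: one must argue that \emph{solving} (not merely deciding) each $\mathcal{U}_i$-problem is polynomial relative to an oracle for its own class, that the recursion of optimality certificates unfolds only polynomially many times, and — crucially — that every adversarial problem references the \emph{same} bottom-level near-optimal set rather than a nested one. The last point is exactly what the decoupling of \cref{fig:nomultigen2} provides and what keeps each $\mathcal{A}_i$ in $\Sigma_r^P$ rather than in a strictly higher class of the hierarchy.
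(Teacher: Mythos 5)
Your proposal is correct and follows essentially the same route as the paper: a \cref{prop:oracle}-style verification procedure whose expensive steps are the bottom-level optimality check, the decoupled adversarial problems (each in $\Sigma_r^P$ by the $\Sigma_r^{P*}$ assumption), and the optimality of the intermediate levels, with the whole argument anchored on \cref{theo:NOMIMLP} as the base case. The only difference is presentational — you unroll the induction as an explicit bottom-up loop over $i = s,\dots,2$, whereas the paper verifies the levels below the top as a single $\text{GNORMP}_{s-1}$ subproblem and establishes its membership in $\Sigma_{r+s-1}^P$ by a recurrence stated as a contradiction.
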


\begin{proof}
We denote by $x_U = (x_{(1)}, x_{(2)},... x_{(s)})$
and $m_{U_i}$ the number of constraints of problem $\mathcal{U}_i$.
As for \cref{theo:bilevelnp},
this proof is based on the complexity of
verifying that a given solution $(x_U, v)$
is feasible and results in an objective value below a given bound.
The verification requires the following steps:

\begin{enumerate}
    \item Compute the top-level objective value and assert that it is below the bound;
    \item Verify feasibility of $(x_U, v)$ with respect to the
    constraints at all levels;
    \item Verify optimality of $v$ for $\mathcal{L}$ parameterized by $x_U$;
    \item Verify optimality of $x_{(i)}$ for the near-optimal robust problem
    solved by the $i$-th level
    $\mathcal{U}_i(x_{(1)}, x_{(2)}... x_{(i-1)}; \delta)$ parameterized
    by all the decisions at levels above and the near-optimality tolerance $\delta$;
    \item Compute the near-optimal lower-level solution $z_{k}$ which is the
    worst-case with respect to the $k$-th constraint of the top-most level $\forall k \in \left[\![m_{U_1}\right]\!]$;
    \item Verify that each $k \in \left[\![m_{U_1}\right]\!]$ top-level constraint is satisfied with respect to the corresponding worst-case solution $z_k$.
\end{enumerate}

Steps 1-2 are performed in polynomial time. Step 3 requires solving
Problem $\mathcal{L}(x_U)$, belonging to $\Sigma_r^P$.
Step 4 consists in solving a generalized near-optimal robust multilevel
problem $\text{GNORMP}_{s-1}$ with one level less than the current problem.
Step 5 requires the solution of $m_{U_1}$ adversarial problems
belonging to $\Sigma_r^P$ since $\mathcal{L}$ is in $\Sigma_r^{P*}$.
Step 6 is an elementary comparison of two quantities for each $k\in \left[\![m_u\right]\!]$.
The step of highest complexity is Step 4.
If it requires to solve a problem in $\Sigma_{r+s-1}^P$, then
$\text{GNORMP}_{s}$ is in $\Sigma_{r+s}^P$ similarly to its canonical problem.\\

Let us assume that Step 4 requires to solve a problem outside $\Sigma_{r+s-1}^P$.
Then $\text{GNORMP}_{s-1}$ is not in $\Sigma_{r+s-1}^P$ as the associated
canonical problem, and that Step 4 requires to solve a problem not in
$\Sigma_{r+s-2}^P$. By recurrence, $\text{GNORMP}_{1}$ is not
in $\Sigma_{r+1}^P$.
However, $\text{GNORMP}_{1}$ is a near-optimal robust bilevel problem
where the lower level itself in $\Sigma_{r}^P$; this corresponds to
the setting of \cref{sec:multilevel}.
This contradicts \cref{theo:NOMIMLP}, $\text{GNORMP}_{s-1}$
is therefore in $\Sigma_{r+s-1}^P$.
Verifying the feasibility of a given solution to $\text{GNORMP}_{s}$ requires solving
a problem at most in $\Sigma_{r+s-1}^P$. Based on \cref{prop:oracle},
$\text{GNORMP}_{s}$ is in $\Sigma_{r+s}^P$ as its canonical multilevel problem.
\qed
\end{proof}

In conclusion, \cref{theo:gnor} shows that adding near-optimality robustness
at an arbitrary level of the multilevel problem does not increase
its complexity in the polynomial hierarchy.
By combining this property with the possibility to add near-optimal deviation
at an intermediate level as in \cref{theo:NOMIMLP}, near-optimality
robustness can be added at multiple levels of a multilevel model
without changing its complexity class in the polynomial hierarchy.

\section{Conclusion}\label{sec:conclusioncomplex}

In this paper, we have shown that for many configurations of bilevel
and multilevel optimization problems, adding near-optimality
robustness to
the canonical problem does increase its complexity in the polynomial 
hierarchy. This result is obtained even though
near-optimality robustness constraints add another level to the
multilevel problem, which in general would change the complexity class.
We defined the class $\Sigma_s^{P*}\left[\mathcal{H}\right]$ that is
general enough to capture many non-linear multilevel problems
but avoids corner cases where the modified objective or
additional linear constraint makes the problem harder to solve.\\

Future work will consider specialized solution algorithms for
some classes of near-optimal robust bilevel and multilevel problems.

\begin{acknowledgements}
The work of Mathieu Besançon was supported by the Mermoz scholarship and the GdR RO.
\end{acknowledgements}

\bibliographystyle{ieeetr}
\bibliography{refs}

\begin{thebibliography}{10}

\bibitem{dempe2018review}
S.~Dempe, {\em Bilevel optimization: theory, algorithms and applications}.
\newblock TU Bergakademie Freiberg, Fakult{\"a}t f{\"u}r Mathematik und
  Informatik, 2018.

\bibitem{besanccon2019near}
M.~Besan{\c{c}}on, M.~F. Anjos, and L.~Brotcorne, ``{Near-optimal Robust
  Bilevel Optimization},'' {\em arXiv preprint arXiv:1908.04040}, 2019.

\bibitem{dempe2015bilevel}
S.~Dempe, V.~Kalashnikov, G.~A. P{\'e}rez-Vald{\'e}s, and N.~Kalashnykova,
  ``Bilevel programming problems,'' {\em Energy Systems. Springer, Berlin},
  2015.

\bibitem{Wiesemann2013}
W.~Wiesemann, A.~Tsoukalas, P.-M. Kleniati, and B.~Rustem, ``{Pessimistic
  Bilevel Optimization},'' {\em SIAM Journal on Optimization}, vol.~23,
  pp.~353--380, jan 2013.

\bibitem{audet1997links}
C.~Audet, P.~Hansen, B.~Jaumard, and G.~Savard, ``Links between linear bilevel
  and mixed 0--1 programming problems,'' {\em Journal of optimization theory
  and applications}, vol.~93, no.~2, pp.~273--300, 1997.

\bibitem{allende2013solving}
G.~B. Allende and G.~Still, ``{Solving bilevel programs with the
  KKT-approach},'' {\em Mathematical programming}, vol.~138, no.~1-2,
  pp.~309--332, 2013.

\bibitem{dempe2019solution}
S.~Dempe and S.~Franke, ``{Solution of bilevel optimization problems using the
  KKT approach},'' {\em Optimization}, vol.~68, no.~8, pp.~1471--1489, 2019.

\bibitem{chung1989np}
S.-J. Chung, ``{NP-completeness of the linear complementarity problem},'' {\em
  Journal of optimization theory and applications}, vol.~60, no.~3,
  pp.~393--399, 1989.

\bibitem{dempe2020special}
S.~Dempe, O.~Khamisov, and Y.~Kochetov, ``A special three-level optimization
  problem,'' {\em Journal of Global Optimization}, vol.~76, no.~3,
  pp.~519--531, 2020.

\bibitem{zare2018class}
M.~H. Zare, O.~Y. {\"O}zalt{\i}n, and O.~A. Prokopyev, ``On a class of bilevel
  linear mixed-integer programs in adversarial settings,'' {\em Journal of
  Global Optimization}, vol.~71, no.~1, pp.~91--113, 2018.

\bibitem{fischetti2017new}
M.~Fischetti, I.~Ljubi{\'c}, M.~Monaci, and M.~Sinnl, ``A new general-purpose
  algorithm for mixed-integer bilevel linear programs,'' {\em Operations
  Research}, vol.~65, no.~6, pp.~1615--1637, 2017.

\bibitem{tahernejad2016branch}
S.~Tahernejad, T.~K. Ralphs, and S.~T. DeNegre, ``A branch-and-cut algorithm
  for mixed integer bilevel linear optimization problems and its
  implementation,'' {\em COR@ L Technical Report 16T-015-R3, Lehigh
  University}, 2016.

\bibitem{caprara2014study}
A.~Caprara, M.~Carvalho, A.~Lodi, and G.~J. Woeginger, ``A study on the
  computational complexity of the bilevel knapsack problem,'' {\em SIAM Journal
  on Optimization}, vol.~24, no.~2, pp.~823--838, 2014.

\bibitem{jeroslow1985polynomial}
R.~G. Jeroslow, ``The polynomial hierarchy and a simple model for competitive
  analysis,'' {\em Mathematical programming}, vol.~32, no.~2, pp.~146--164,
  1985.

\bibitem{bolusani2020unified}
S.~Bolusani, S.~Coniglio, T.~K. Ralphs, and S.~Tahernejad, ``{A Unified
  Framework for Multistage and Multilevel Mixed Integer Linear Optimization},''
  2020.

\bibitem{ralphscomplexity}
T.~Ralphs, A.~Bulut, S.~DeNegre, A.~Lodi, F.~Rossi, S.~Smriglio, and
  G.~Woeginger, ``Complexity and multi-level optimization,''
\newblock \url{https://coral.ise.lehigh.edu/~ted/files/talks/MIP12.pdf}.

\bibitem{stockmeyer1976polynomial}
L.~J. Stockmeyer, ``The polynomial-time hierarchy,'' {\em Theoretical Computer
  Science}, vol.~3, no.~1, pp.~1--22, 1976.

\bibitem{moore1990mixed}
J.~T. Moore and J.~F. Bard, ``The mixed integer linear bilevel programming
  problem,'' {\em Operations research}, vol.~38, no.~5, pp.~911--921, 1990.

\end{thebibliography}

\end{document}